
\documentclass[10pt]{amsart}
\usepackage{amssymb}
\usepackage{epsfig}
\usepackage{latexsym}
\usepackage{amsmath,amssymb,amsfonts,amsthm,graphics}
\usepackage{eucal}
\usepackage{eufrak}
\usepackage[all]{xypic}
\usepackage{xspace}
\usepackage{color}



\textwidth=15truecm \textheight=18truecm \baselineskip=0.8truecm
\overfullrule=0pt
\parskip=0.8\baselineskip
\parindent=0truecm
\topmargin=0.5truecm \headsep=1.2truecm


\theoremstyle{plain}
\newtheorem{theorem}{Theorem}

\newtheorem{proposition}{Proposition}

\theoremstyle{definition}

\theoremstyle{remark}

\numberwithin{equation}{section}

\begin{document}


\title[Loops in canonical RNA pseudoknot structures]
      {Loops in canonical RNA pseudoknot structures}
\author{Markus E. Nebel$^{\dagger}$, Christian M. Reidys$^{*}$
        and Rita R. Wang $^{*}$}
\address{ $^{*}$ Center for Combinatorics, LPMC-TJKLC \\
          Nankai University  \\
          Tianjin 300071\\
          P.R.~China\\
          Phone: *86-22-2350-6800\\
          Fax:   *86-22-2350-9272 \\
          $^{\dagger}$ TU Kaiserslautern \\
          67663 Kaiserslautern \\
          Germany \\
}
\email{reidys@nankai.edu.cn}
\thanks{}
\keywords{$k$-noncrossing $\tau$-canonical RNA structure, bivariate
generating function, singularity analysis, central limit theorem,
hairpin-loop, interior-loop, bulge}
\date{October, 2009}
\begin{abstract}
In this paper we compute the limit distributions of the numbers of
hairpin-loops, interior-loops and bulges in $k$-noncrossing RNA structures.
The latter are coarse grained RNA structures allowing for cross-serial
interactions, subject to the constraint that there are at most
$k-1$ mutually crossing arcs in the diagram representation of the molecule.
We prove central limit theorems by means of studying the corresponding
bivariate generating functions. These generating functions are obtained
by symbolic inflation of ${\sf lv}_k^{\sf 5}$-shapes \cite{Reidys:shape}.
\end{abstract}
\maketitle {{\small
}}

\section{Introduction}\label{S:intro}

An RNA molecule is a sequence of the four nucleotides {\bf A}, {\bf
G}, {\bf U}, {\bf C} together with the Watson-Crick ({\bf A-U}, {\bf
G-C}) and {\bf U-G} base pairing rules. The sequence of bases is
called the primary structure of the RNA molecule. Two bases in the
primary structure which are not adjacent may form hydrogen bonds
following the Watson-Crick base pairing rules. Three decades ago
Waterman {\it et al.} \cite{Kleitman:70,Nussinov,Waterman:79}
analyzed RNA secondary structures. Secondary structures are coarse
grained RNA contact structures. They can be represented as diagrams
and planar graphs, see Fig.~\ref{F:second}.
\begin{figure}[ht]
\centerline{\includegraphics[width=0.8\textwidth]{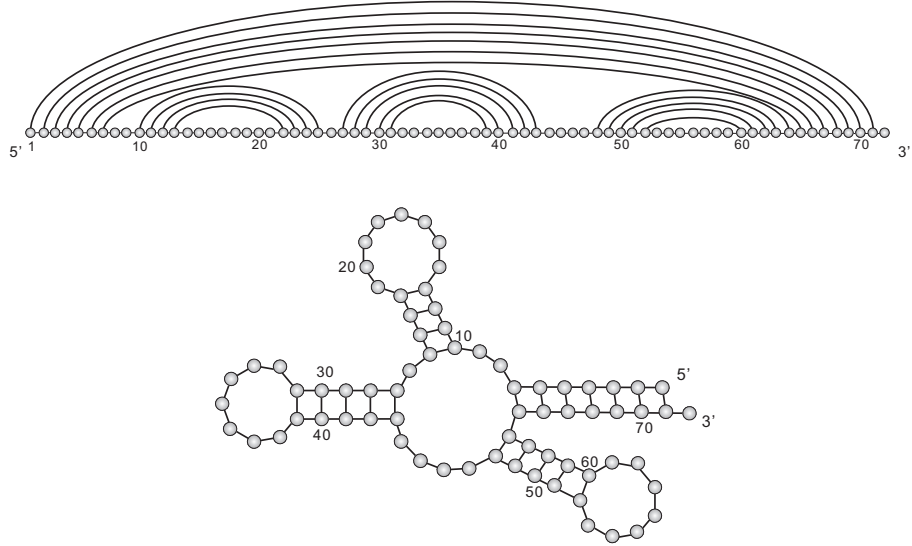}}
\caption{\small The Sprinzl tRNA RD7550 secondary structure
represented as $2$-noncrossing diagram (top) and planar graph
(bottom).} \label{F:second}
\end{figure}
Diagrams are labeled graphs over the vertex set $[n]=\{1, \dots,
n\}$ with vertex degrees $\le 1$, represented by drawing its
vertices on a horizontal line and its arcs $(i,j)$ ($i<j$), in the
upper half-plane, see Fig.~\ref{F:second} and Fig.~\ref{F:canonical}.
Here, vertices and arcs correspond to the nucleotides {\bf A}, {\bf
G}, {\bf U}, {\bf C} and Watson-Crick ({\bf A-U}, {\bf G-C}) and
({\bf U-G}) base pairs, respectively. In a diagram two arcs
$(i_1,j_1)$ and $(i_2,j_2)$ are called crossing if $i_1<i_2<j_1<j_2$
holds. Accordingly, a $k$-crossing is a sequence of arcs
$(i_1,j_1),\dots,(i_k,j_k)$ such that
$i_1<i_2<\dots<i_k<j_1<j_2<\dots <j_k$, see Fig.~\ref{F:canonical}.
We call diagrams containing at most $(k-1)$-crossings,
$k$-noncrossing diagrams ($k$-noncrossing partial matchings). The
length of an arc $(i,j)$ is given by $j-i$, characterizing the
minimal length of a hairpin loop. A stack of length $\tau$ is a
sequence of ``parallel'' arcs of the form
\begin{equation}\label{E:stack}
((i,j),(i+1,j-1),\ldots, (i+(\tau-1),j-(\tau-1))),
\end{equation}
and we denote it by $S_{i,j}^{\tau}$. We call an arc of length one a
$1$-arc. A $k$-noncrossing, $\tau$-canonical RNA structure is a
$k$-noncrossing diagram without $1$-arcs, having a minimum
stack-size of $\tau$, see Fig.~\ref{F:canonical}.
Let $\mathcal{T}_{k,\tau}(n)$ denote the set of $k$-noncrossing,
$\tau$-canonical RNA structures of length $n$ and let
$\mathsf{T}_{k,\tau}(n)$ denote their number.
\begin{figure}[ht]
\centerline{\includegraphics[width=0.65\textwidth]{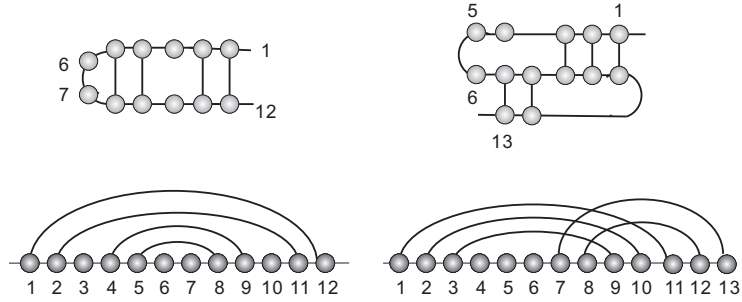}}
\caption{\small A $2$-noncrossing, $2$-canonical RNA structure
(left) and a $3$-noncrossing, $2$-canonical RNA structure (right)
represented as planer graphs (top) and diagrams (bottom).}
\label{F:canonical}
\end{figure}

We next introduce the following structural elements of $k$-noncrossing,
$\tau$-canonical RNA structures, see Fig.~\ref{F:loop11} and
Fig.~\ref{F:loop}.
\begin{figure}[ht]
\centerline{\includegraphics[width=0.8\textwidth]{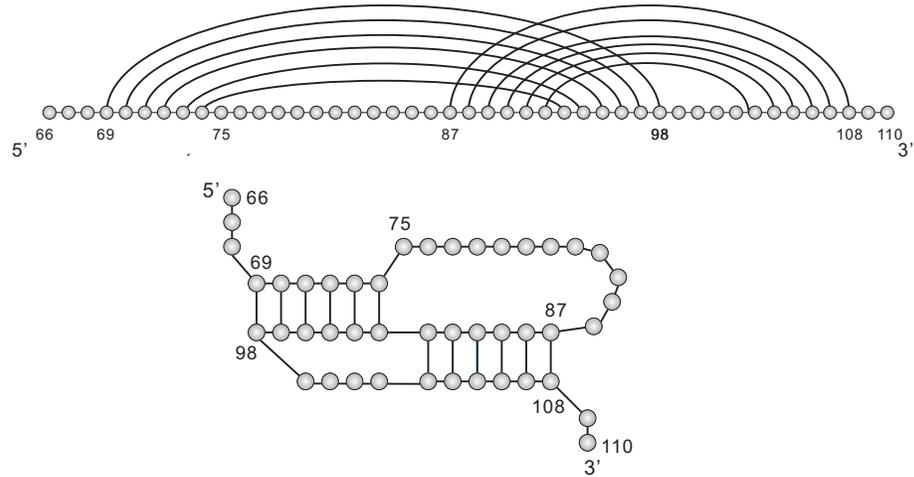}}
\caption{\small $3$-noncrossing, $6$-canonical structures:
the pseudoknot structure of the PrP-encoding mRNA represented as diagrams (top) and planer graphs (bottom)..}
\label{F:loop11}
\end{figure}
Let $[i,j]$ denote an interval, i.e. a sequence of consecutive isolated
vertices $(i,i+1,\ldots,j-1,j)$. We consider, see Fig.~\ref{F:loop}
\begin{enumerate}
\item a {\it hairpin-loop} is  a pair
$$
((i,j),[i+1,j-1]).
$$
\item an {\it interior-loop} is a sequence
$$((i_1,j_1),[i_1+1,i_2-1],(i_2,j_2),[j_2+1,j_1-1]),$$
where $(i_2,j_2)$ is nested in $(i_1,j_1)$.
\item a {\it bulge} is a sequence
$$((i_1,j_1),[i_1+1,i_2-1],(i_2,j_1-1))
\quad\text{or}\quad ((i_1,j_1),(i_1+1,j_2),[j_2+1,j_1-1]).$$
\item a {\it stem} is a sequence of stacks
$$\left(S_{i_1,j_1}^{\tau_1},
S_{i_2,j_2}^{\tau_2},\ldots,S_{i_{s},j_{s}}^{\tau_{s}}\right)$$
where the stack $S_{i_{m},j_{m}}^{\tau_m}$ is nested
in $S_{i_{m-1},j_{m-1}}^{\tau_{m-1}}$, $2\leq m\leq s$
and there are no arcs of the form $(i_1-1, j_1+1)$ and
$(i_s+\tau_s, j_s-\tau_s)$.
\end{enumerate}
\begin{figure}[ht]
\centerline{\includegraphics[width=0.55\textwidth]{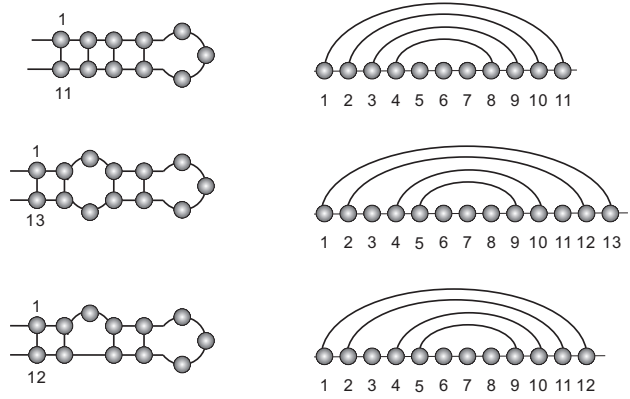}}
\caption{\small The loop-types: hairpin-loop (top), interior-loop
(middle) and bulge (bottom).} \label{F:loop}
\end{figure}
\section{Preliminaries}\label{S:preli}
Let $f_k(n,\ell)$ denote the number of $k$-noncrossing diagrams on
$n$ vertices having exactly $\ell$ isolated vertices. A diagram
without isolated points is called a matching. The exponential
generating function of $k$-noncrossing matchings satisfies the
following identity \cite{Chen,Grabiner:93a,Reidys:07pseu}
\begin{equation}\label{E:ww0}
{{\bf H}_k(z)}=\sum_{n\ge 0} f_{k}(2n,0)\cdot\frac{z^{2n}}{(2n)!}  =
\det[I_{i-j}(2z)-I_{i+j}(2z)]|_{i,j=1}^{k-1}
\end{equation}
where $I_{r}(2z)=\sum_{j \ge 0}\frac{z^{2j+r}}{{j!(j+r)!}}$ is the
hyperbolic Bessel function of the first kind of order $r$.
Eq.~(\ref{E:ww0}) allows us to conclude that the ordinary generating
function
\begin{equation*}
{\bf F}_k(z)=\sum_{n\ge 0}f_k(2n,0) z^{n}
\end{equation*}
is $D$-finite \cite{Stanley:80}. This follows from the fact that
$I_{r}(2z)$ is $D$-finite and $D$-finite power series form an
algebra \cite{Stanley:80}. Consequently, there exists some $e\in
\mathbb{N}$ such that
\begin{equation}\label{E:ODE}
q_{0,k}(z)\frac{d^e}{d z^e}{\bf F}_k(z)+q_{1,k}(z)\frac{d^{e-1}}{d
z^{e-1}}{\bf F}_k(z)+\cdots+q_{e,k}(z){\bf F}_k(z)=0,
\end{equation}
where $q_{j,k}(z)$ are polynomials and $q_{0,k}(z)\neq 0$.
The ordinary differential equations (ODE) for ${\bf F}_k(z)$,
where $2\leq k\leq 7$ are obtained by the
MAPLE package {\tt GFUN} from the exact data of $f_k(2n,0)$. They are
verified by first deriving the corresponding $P$-recursions
\cite{Stanley:80} for $f_k(2n,0)$
second transforming these $P$-recursions into $P$-recursions of
$f_k(2n,0)/(2n)!$ and third deriving the corresponding ODEs for
${\bf H}_k(z)$ and verifying that the RHS of eq.~(\ref{E:ww0}) is a solution.
The key point is that any singularity of ${\bf F}_k(z)$ is contained in the
set of roots of $q_{0,k}(z)$ \cite{Stanley:80}, which we denote by
$R_k$. For $2\leq k\leq 7$, we give the polynomials $q_{0,k}(z)$ and
their roots in Table \ref{Table:polyroot}.
{\small
\begin{table}
\begin{center}
\begin{tabular}{cll}
\hline
$k$ & $q_{0,k}(z)$ & $R_k$  \\
\hline
$2$ & $(4z-1)z$ & $\{\frac{1}{4}\}$\\
$3$ & $(16z-1)z^2$ & $\{\frac{1}{16}\}$\\
$4$ & $(144z^2-40z+1)z^3$ & $\{\frac{1}{4},\frac{1}{36}\}$\\
$5$ & $(1024z^2-80z+1)z^4$ & $\{\frac{1}{16},\frac{1}{64}\}$\\
$6$ & $(14400z^3-4144z^2+140z-1)z^5$ & $\{\frac{1}{4},\frac{1}{36},\frac{1}{100}\}$\\
$7$ & $(147456z^3-12544z^2+224z-1)z^6$ & $\{\frac{1}{16},\frac{1}{64},\frac{1}{144}\}$\\
\hline
\end{tabular}
\centerline{}
\smallskip
\caption{\small We present the polynomials $q_{0,k}(z)$  and their
nonzero roots obtained by the MAPLE package {\tt GFUN}. }
\label{Table:polyroot}
\end{center}
\end{table}
}
In \cite{Reidys:08k} we showed that for arbitrary $k$
\begin{equation}\label{E:theorem}
f_{k}(2n,0) \, \sim  \, \widetilde{c}_k  \, n^{-((k-1)^2+(k-1)/2)}\,
(2(k-1))^{2n},\qquad \widetilde{c}_k>0
\end{equation}
in accordance with the fact that ${\bf F}_k(z)$ has the unique
dominant singularity $\rho_k^2$, where $\rho_k=1/(2k-2)$.

We next introduce a central limit theorem due to Bender \cite{Bender:73}.
It is proved by analyzing the characteristic function by the
L\'{e}vy-Cram\'{e}r Theorem (Theorem IX.4 in \cite{Flajolet:07a}).
\begin{theorem}\label{T:normal}
Suppose we are given the bivariate generating function
\begin{equation}
f(z,u)=\sum_{n,t\geq 0}f(n,t)\,z^n\,u^t,
\end{equation}
where $f(n,t)\geq 0$
and $f(n)=\sum_tf(n,t)$. Let $\mathbb{X}_n$ be a r.v.~such that
$\mathbb{P}(\mathbb{X}_n=t)=f(n,t)/f(n)$. Suppose
\begin{equation}\label{E:knackpunkt}
[z^n]f(z,e^s)\sim c(s) \, n^{\alpha}\,  \gamma(s)^{-n}
\end{equation}
uniformly in $s$ in a neighborhood of $0$, where $c(s)$ is continuous
and nonzero near $0$, $\alpha$ is a constant, and $\gamma(s)$ is analytic
near $0$.
Then there exists a pair $(\mu,\sigma)$ such that the normalized
random variable
\begin{equation}\label{E:normalized}
\mathbb{X}^*_{n}=\frac{\mathbb{X}_{n}- \mu \, n}{\sqrt{{n\,\sigma}^2 }}
\end{equation}
has asymptotically normal distribution with parameter $(0,1)$.
That is we have
\begin{equation}\label{E:converge-2}
\lim_{n\to\infty}
\mathbb{P}\left(
\mathbb{X}^*_{n} < x \right)  =
\frac{1}{\sqrt{2\pi}} \int_{-\infty}^{x}\,e^{-\frac{1}{2}c^2} dc \,
\end{equation}
where $\mu$ and $\sigma^2$ are given by
\begin{equation}\label{E:was}
\mu= -\frac{\gamma'(0)}{\gamma(0)}
\quad \text{\it and} \quad \sigma^2=
\left(\frac{\gamma'(0)}{\gamma(0)}
\right)^2-\frac{\gamma''(0)}{\gamma(0)}.
\end{equation}
\end{theorem}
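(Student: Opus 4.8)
The plan is to follow Bender's original argument: derive a quasi--powers estimate for the moment generating function of $\mathbb{X}_n$, pass to the normalized variable, and invoke the L\'{e}vy--Cram\'{e}r continuity theorem. The starting point is that the generating function of $\mathbb{X}_n$ can be read off from $f(z,u)$ directly. Since $\mathbb{P}(\mathbb{X}_n=t)=f(n,t)/f(n)$ and $f(n)=\sum_t f(n,t)=[z^n]f(z,1)$, we have, for $s$ in a neighbourhood of $0$,
\begin{equation*}
\mathbb{E}\!\left(e^{s\mathbb{X}_n}\right)=\sum_{t\ge 0}\frac{f(n,t)}{f(n)}\,e^{st}=\frac{[z^n]f(z,e^s)}{[z^n]f(z,1)},
\end{equation*}
a finite sum since only finitely many $f(n,t)$ are nonzero. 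Applying the hypothesis~(\ref{E:knackpunkt}) to the numerator (with parameter $s$) and to the denominator (with parameter $0$), and using crucially that the estimate is \emph{uniform} in $s$, the algebraic factor $n^{\alpha}$ and the constants $c(\cdot)$ behave continuously while the exponential factors combine, giving
\begin{equation*}
\mathbb{E}\!\left(e^{s\mathbb{X}_n}\right)\sim \frac{c(s)}{c(0)}\left(\frac{\gamma(0)}{\gamma(s)}\right)^{n}
\end{equation*}
uniformly for $s$ in a fixed neighbourhood of $0$. This is the quasi--powers form, with $\gamma(0)\neq 0$ forced by positivity of $f(n)$.

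Next I would expand around $s=0$. Write $L(s)=\log\gamma(s)$, analytic near $0$ because $\gamma$ is analytic there and nonvanishing. Its Taylor expansion is $L(s)-L(0)=-\mu s-\tfrac12\sigma^2 s^2+O(s^3)$, where $\mu=-\gamma'(0)/\gamma(0)$ and $\sigma^2=(\gamma'(0)/\gamma(0))^2-\gamma''(0)/\gamma(0)$ are exactly the quantities in~(\ref{E:was}), and the $O$--constant is uniform on a small disk. Consequently
\begin{equation*}
\log \mathbb{E}\!\left(e^{s\mathbb{X}_n}\right)=\mu\, n\, s+\tfrac12\,\sigma^2\, n\, s^2+O\!\left(n s^3\right)+O(1),
\end{equation*}
where the $O(1)$ absorbs $\log(c(s)/c(0))$, bounded near $0$. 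Now substitute $s=\mathrm{i}t/\sqrt{n\sigma^2}$ for fixed real $t$; since this tends to $0$ it eventually lies in the neighbourhood where the estimate holds, and a direct computation gives
\begin{equation*}
\log \mathbb{E}\!\left(e^{\mathrm{i}t\mathbb{X}^*_n}\right)=-\frac{\mathrm{i}t\mu n}{\sqrt{n\sigma^2}}+\log \mathbb{E}\!\left(e^{\mathrm{i}t\mathbb{X}_n/\sqrt{n\sigma^2}}\right)=-\frac{t^2}{2}+O\!\left(\frac{t^3}{\sqrt n}\right)+o(1),
\end{equation*}
the linear terms cancelling by the choice of $\mu$ and the quadratic term normalizing to $-t^2/2$ by the choice of $\sigma^2$. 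Hence the characteristic function of $\mathbb{X}^*_n$ converges pointwise to $e^{-t^2/2}$, and the L\'{e}vy--Cram\'{e}r continuity theorem (Theorem~IX.4 in~\cite{Flajolet:07a}) yields~(\ref{E:converge-2}).

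The main obstacle is the exchange of the asymptotic relation~(\ref{E:knackpunkt}) with the $n$--dependent substitution $s=\mathrm{i}t/\sqrt{n\sigma^2}$: this is precisely where uniformity in $s$ cannot be dispensed with, since along a moving sequence of arguments the cancellation of $n^{\alpha}$ and of $\gamma(\cdot)^{-n}$ in the ratio $[z^n]f(z,e^s)/[z^n]f(z,1)$ would otherwise fail. One must also verify that the error terms, both in the Taylor expansion of $L$ and in $\log(c(s)/c(0))$, are uniform on a fixed neighbourhood, so that after substitution they contribute $O(t^3/\sqrt n)$ and $o(1)$ respectively. Everything else---the cancellation of the linear terms, the normalization of the quadratic term, and the appeal to the continuity theorem---is routine once the quasi--powers estimate is in place. (Implicitly $\sigma^2>0$, so that~(\ref{E:normalized}) is meaningful; the degenerate case $\sigma^2=0$ is excluded.)
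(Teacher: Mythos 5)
Your argument is correct and is precisely the quasi-powers/characteristic-function route that the paper itself attributes to Bender and the L\'evy--Cram\'er theorem (the paper gives no proof of its own, only the citation \cite{Bender:73} and a one-line description of this method). Your explicit attention to the uniformity required for the moving substitution $s=\mathrm{i}t/\sqrt{n\sigma^2}$ and to the implicit nondegeneracy assumption $\sigma^2>0$ correctly isolates the only delicate points of that argument.
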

The crucial points for applying Theorem~\ref{T:normal} are
{\sf (a)} eq.~(\ref{E:knackpunkt})
\begin{equation*}
[z^n]f(z,e^s)\sim c(s) \, n^{\alpha}\,  \gamma(s)^{-n},
\end{equation*}
uniformly in $s$ in a neighborhood of $0$, where $c(s)$ is continuous
and nonzero near $0$ and $\alpha$ is a constant and {\sf (b)}
the analyticity of $\gamma(s)$ in $s$ near $0$.
In the following, we have generating functions of the form
${\bf F}_k(\psi(z,s))$. In this situation, Theorem~\ref{T:uniform} below
guarantees under specific conditions
\begin{equation*}
[z^n]{\bf F}_k(\psi(z,s)) \sim A(s)\,n^{-((k-1)^2+(k-1)/2)}
\left(\frac{1}{\gamma(s)}\right)^n,\quad \text{$A(s)$ continuous},
\end{equation*}
for $2\leq k\leq 7$.
The analyticity of $\gamma(s)$ is guaranteed by the analytic implicit
function theorem \cite{Flajolet:07a}.
\begin{theorem}\label{T:uniform}\cite{Reidys:lego2}
Suppose $2\leq k\leq 7$. Let $\psi(z,s)$ be an analytic function in a domain
\begin{equation}
\mathcal{D}=\{(z,s)||z|\leq r,|s|<\epsilon\}
\end{equation}
such that $\psi(0,s)=0$. In addition suppose $\gamma(s)$ is the
unique dominant singularity of ${\bf F}_k(\psi(z,s))$ and analytic
solution of $\psi(\gamma(s),s)=\rho_k^2$, $|\gamma(s)|\leq r$,
$\partial_z\psi(\gamma(s),s)\neq 0$ for $\vert s\vert<\epsilon$.
Then ${\bf F}_k(\psi(z,s))$ has a singular expansion and
\begin{equation}\label{E:LL}
[z^n]{\bf F}_k(\psi(z,s)) \sim A(s)\,n^{-((k-1)^2+(k-1)/2)}
\left(\frac{1}{\gamma(s)}\right)^n\quad \text{for some continuous
$A(s)\in\mathbb{C}$},
\end{equation}
uniformly in $s$ contained in a small neighborhood of $0$.
\end{theorem}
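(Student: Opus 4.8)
The plan is to derive the asymptotics of $[z^n]\mathbf{F}_k(\psi(z,s))$ by transferring the known singular behavior of $\mathbf{F}_k$ at $\rho_k^2$ through the analytic substitution $z\mapsto\psi(z,s)$, treating $s$ as a parameter that is carried along uniformly. First I would recall from eq.~(\ref{E:theorem}) and the surrounding discussion that $\mathbf{F}_k(z)$ is $D$-finite with a unique dominant singularity at $\rho_k^2=1/(2k-2)^2$, and that near this point it admits a singular expansion of the form $\mathbf{F}_k(z) = P(z) + c_k\,(\rho_k^2-z)^{(k-1)^2+(k-1)/2-1}\log(\rho_k^2-z)\cdot(1+o(1))$ (or the appropriate algebraic-logarithmic form consistent with the subexponential factor $n^{-((k-1)^2+(k-1)/2)}$), where $P$ is analytic at $\rho_k^2$; this is exactly what makes the transfer theorems of \cite{Flajolet:07a} applicable. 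The exponent $(k-1)^2+(k-1)/2$ in eq.~(\ref{E:LL}) is forced by this, so no new singularity analysis of $\mathbf{F}_k$ itself is needed here.

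Next I would analyze the composed function $\mathbf{F}_k(\psi(z,s))$ for fixed small $s$. Since $\psi$ is analytic on $\mathcal D$ with $\psi(0,s)=0$, and $\gamma(s)$ is by hypothesis the analytic solution of $\psi(\gamma(s),s)=\rho_k^2$ with $|\gamma(s)|\le r$ and $\partial_z\psi(\gamma(s),s)\neq0$, the analytic implicit function theorem shows $z\mapsto\psi(z,s)$ is a local biholomorphism near $\gamma(s)$ sending $\gamma(s)$ to $\rho_k^2$. Hence $\rho_k^2-\psi(z,s) = \partial_z\psi(\gamma(s),s)\,(\gamma(s)-z)\,(1+O(\gamma(s)-z))$, and substituting into the singular expansion of $\mathbf{F}_k$ yields that $\mathbf{F}_k(\psi(z,s))$ has a singular expansion at $z=\gamma(s)$ of the same type, with singular coefficient $A(s)$ a continuous (indeed analytic) function of $s$ built from $c_k$ and powers of $\partial_z\psi(\gamma(s),s)$. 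One must check that $\gamma(s)$ remains the \emph{unique} dominant singularity and that $\mathbf{F}_k(\psi(z,s))$ continues analytically to a $\Delta$-domain at $\gamma(s)$ — this follows from the corresponding property of $\mathbf{F}_k$ at $\rho_k^2$ pulled back through the local biholomorphism, together with the hypothesis that $\gamma(s)$ is the unique such point; since this is asserted in the statement, it may be invoked directly.

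Then applying the transfer theorem (Theorem VI.3 / the standard singularity-analysis transfer of \cite{Flajolet:07a}) gives, for each fixed $s$,
\begin{equation*}
[z^n]\mathbf{F}_k(\psi(z,s)) \sim A(s)\,n^{-((k-1)^2+(k-1)/2)}\,\gamma(s)^{-n}.
\end{equation*}
The remaining — and I expect principal — task is to upgrade this to hold \emph{uniformly} for $s$ in a neighborhood of $0$. The mechanism is that every ingredient depends analytically (hence continuously, with locally uniform control) on $s$: the location $\gamma(s)$, the derivative $\partial_z\psi(\gamma(s),s)$, the radius of the $\Delta$-domain, and the error terms in the singular expansion. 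Concretely, one fixes a common $\Delta$-domain (after the change of variables, a common sector around $\rho_k^2$ works for all small $s$ by compactness) and checks that the constants in the big-$O$ remainder estimates of the transfer lemma can be chosen independent of $s$; this is precisely the kind of uniformity packaged in \cite{Reidys:lego2}, so I would either cite that construction directly or reproduce its short argument. The one place to be careful is ensuring $\partial_z\psi(\gamma(s),s)$ stays bounded away from $0$ and $\gamma(s)$ stays strictly inside $|z|\le r$ and strictly dominant over all other singularities uniformly in $s$ — all guaranteed by continuity from the $s=0$ case and the standing hypotheses — so that the implied constants do not degenerate. This yields eq.~(\ref{E:LL}) and completes the proof.
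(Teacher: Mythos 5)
Your proposal follows essentially the same route as the paper's own proof: linearize $\psi$ at $\gamma(s)$ (where $\partial_z\psi(\gamma(s),s)\neq 0$), substitute into the algebraic--logarithmic singular expansion of ${\bf F}_k$ at $\rho_k^2$ obtained from the ODEs, apply the transfer theorem, and obtain uniformity in $s$ from the analytic dependence of all coefficients on $s$. The only cosmetic difference is that the paper first normalizes via $\widetilde{\psi}(z,s)=\psi(\gamma(s)z,s)$ so that the singularity sits at $z=1$ for all $s$ (extracting the factor $\gamma(s)^{-n}$ by rescaling at the end), whereas you work directly at $z=\gamma(s)$; the content is the same.
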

To keep the paper selfcontained we give a direct proof of
Theorem~\ref{T:uniform} in Section~\ref{S:proof}. This avoids
calling upon generic results, such as the uniformity Lemma of
singularity analysis \cite{Flajolet:07a}.

\section{The generating function}\label{S:comb}

In this section we compute the bivariate generating functions
of hairpin-loops, interior-loops and bulges. Let $h_{k,\tau}(n,t)$,
$i_{k,\tau}(n,t)$ and $b_{k,\tau}(n,t)$ denote the numbers of
$k$-noncrossing, $\tau$-canonical RNA structures of length $n$
with $t$ hairpin-loops, interior-loops and bulges. We set
\begin{eqnarray}
{\bf H}_{k,\tau}(z,u_1)&=&\sum_{n\geq 0}\sum_{t\geq 0}
h_{k,\tau}(n,t)z^n\,u_1^t,\\
{\bf I}_{k,\tau}(z,u_2)&=&\sum_{n\geq 0}\sum_{t\geq 0}
i_{k,\tau}(n,t)z^n\,u_2^t,\\
{\bf B}_{k,\tau}(z,u_3)&=&\sum_{n\geq 0}\sum_{t\geq
0}b_{k,\tau}(n,t)z^n\,u_3^t.
\end{eqnarray}
In order to derive the above generating functions we use symbolic
enumeration \cite{Flajolet:07a}.
A combinatorial class is a set of finite size with the
definition of size function of its elements, whose elements are all
finite size and the number of certain size elements is finite.
Suppose $\mathcal{C}$ be a combinatorial class and $c\in
\mathcal{C}$. We denote the size of $c$ by $|c|$. There are two
special combinatorial classes $\mathcal{E}$ and $\mathcal{Z}$ which
respectively contains only an element of size $0$ and an element of
size $1$. The subset of $\mathcal{C}$ which contains all the
elements of size $n$ in $\mathcal{C}$ is denoted by $\mathcal{C}_n$.
Then the generating function of a combinatorial class $\mathcal{C}$ is
\begin{equation}
{\bf C}(z)=\sum_{c\in\mathcal{C}}z^{|c|}=\sum_{n\geq 0}C_n\, z^n,
\end{equation}
where $\mathcal{C}_n\subset \mathcal{C}$ and $C_n=|\mathcal{C}_n|$.
In particular the generating functions of $\mathcal{E}$ and $\mathcal{Z}$
are given by ${\bf E}(z)=1$ and ${\bf Z}(z)=z$. For any two combinatorial
classes $\mathcal{C}$,
$\mathcal{D}$, we have the following operations:
\begin{itemize}
\item $\mathcal{C}+\mathcal{D}:=\mathcal{C}\cup\mathcal{D}$, if
$\mathcal{C}\cap\mathcal{D}=\varnothing$
\item $\mathcal{C}\times\mathcal{D}:=\{(c,d)|
c\in\mathcal{C},d\in\mathcal{D}\}$ and
$\mathcal{C}^m:=\prod_{i=1}^m\mathcal{C}$
\item $\textsc{Seq}(\mathcal{C})=
{\mathcal{E}}+\mathcal{C}+\mathcal{C}^2+\cdots$.
\end{itemize}
We have the following relations between the operations of
combinatorial classes and the operations of their generating
functions:
\begin{eqnarray}\label{E:symgf}
\mathcal{A}=\mathcal{C}+\mathcal{D} & \Rightarrow &
{\bf A}(z)={\bf C}(z)+{\bf D}(z)\\
\mathcal{A}=\mathcal{C}\times\mathcal{D}&\Rightarrow&
{\bf A}(z)={\bf C}(z)\cdot {\bf D}(z)\\
\mathcal{A}=
\textsc{Seq}(\mathcal{C})&\Rightarrow& {\bf A}(z)=(1-{\bf C}(z))^{-1},
\label{E:symgff}
\end{eqnarray}
where ${\bf A}(z)$, ${\bf C}(z)$, ${\bf D}(z)$ is the generating function
of $\mathcal{A}$, $\mathcal{C}$ and $\mathcal{D}$.

Given a $k$-noncrossing, $\tau$-canonical RNA structure $\delta$,
its ${\sf lv}_k^{\sf 5}$-shape, ${\sf lv}_k^{\sf 5}(\delta)$
\cite{Reidys:shape}, is obtained by first removing all isolated
vertices and second collapsing any stack into a single arc, see
Fig.\ref{F:I5shape}. By construction, ${\sf lv}_k^{\sf 5}$-shapes
do not preserve stack-lengths, interior loops and unpaired regions.
In the following, we shall refer to ${\sf lv}_k^{\sf 5}$-shape
simply as shape.
Let ${\mathcal T}_{k,\tau}$ denote the set of $k$-noncrossing,
$\tau$-canonical structures and ${\mathcal I}_k$ the set of
all $k$-noncrossing shapes and ${\mathcal I}_k(m)$ those having $m$
$1$-arcs, see Figure~\ref{F:I5shape}.
Each stem of a $k$-noncrossing, $\tau$-canonical RNA structure is
mapped into an arc in its corresponding shape and all hairpin-loops
are mapped into $1$-arcs. Therefore we have the surjective map,
\begin{equation}
\varphi:\ {\mathcal T}_{k,\tau}\rightarrow {\mathcal I}_k.
\end{equation}
\begin{figure}[ht]
\centerline{\includegraphics[width=0.6\textwidth]{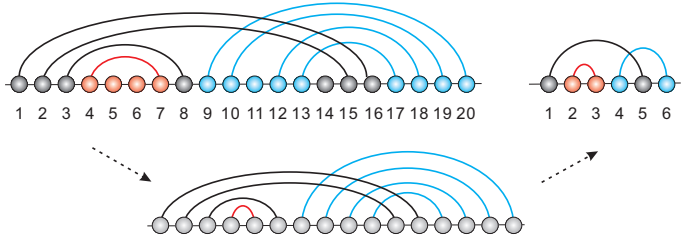}}
\caption{\small A $3$-noncrossing, $2$-canonical RNA structure
(top-left) is mapped into its shape (top-right)
in two steps. A stem (blue) is mapped into a single shape-arc (blue).
A hairpin-loop (red) is mapped into a shape-$1$-arc (red).}
\label{F:I5shape}
\end{figure}
Indeed, for a given shape $\gamma$ in ${\mathcal I}_k$, we can derive a
$k$-noncrossing, $\tau$-canonical structure having arc-length$\geq
2$, we can add arcs to each arc contained in the shape such that every
resulting stack has $\tau$ arcs and insert one isolated vertex in each
$1$-arc. Let
${\mathcal I}_k(s,m)$ and $i_k(s,m)$ denote the set and number of
the ${\sf lv}_k^{\sf 5}$-shapes of length $2s$ with $m$ $1$-arcs and
\begin{equation}
{\bf I}_k(x,y)=\sum_{s\geq0}\sum_{m=0}^{s} i_k(s,m)x^sy^m
\end{equation}
be the bivariate generating function. Furthermore, let $\mathcal{I}_k(m)$
denote the set of shapes $\gamma$ having $m$ $1$-arcs.
Let $k,s,m$ be natural numbers where $k\geq 2$, then the
generating function ${\bf I}_k(x,y)$ \cite{Reidys:shape} is given by
\begin{equation}\label{E:gfIk}
{\bf I}_k(x,y) =  \frac{1+x}{1+2x-xy}
{\bf F}_k\left(\frac{x(1+x)}{(1+2x-xy)^2}\right)
\end{equation}
\begin{theorem}\label{T:hairpin}
Suppose $k,\tau\in \mathbb{N}, k\geq 2,\tau\geq1$. Then
\begin{equation}\label{E:bivagfh}
\begin{split}
{\bf H}_{k,\tau}(z,u_1)=&\frac{(1-z)(1-z^2+z^{2\tau})}
{(1-z)^2(1-z^2+z^{2\tau})+z^{2\tau}-z^{2\tau+1}u_1}\\
&{\bf F}_k\left(\frac{z^{2\tau}(1-z)^2(1-z^2+z^{2\tau})}
{\left((1-z)^2(1-z^2+z^{2\tau})
+z^{2\tau}-z^{2\tau+1}u_1\right)^2}\right),
\end{split}
\end{equation}
\begin{equation}
\begin{split}
{\bf I}_{k,\tau}(z,u_2)=&\frac{(1-z^2)(1-z)^2
-u_2z^{2\tau+2}+(2z^2-2z+1)z^{2\tau}} {(1-z)\left((1-z^2)(1-z)^2
-u_2z^{2\tau+2}+(2z^2-3z+2)z^{2\tau}\right)}\\
&{\bf F}_k\left(\frac{z^{2\tau}\left((1-z^2)(1-z)^2
-u_2z^{2\tau+2}+(2z^2-2z+1)z^{2\tau}\right)} {\left((1-z^2)(1-z)^2
-u_2z^{2\tau+2}+(2z^2-3z+2)z^{2\tau}\right)^2}\right),
\end{split}
\end{equation}
\begin{equation}
\begin{split}
{\bf
B}_{k,\tau}(z,u_3)=&\frac{(1-z^2)(1-z)-2u_3z^{2\tau+1}+(z+1)z^{2\tau}}
{(1-z)\left((1-z^2)(1-z)-2u_3z^{2\tau+1}+(z+2)z^{2\tau}\right)}\\
&{\bf F}_k\left(\frac{z^{2\tau}\left((1-z^2)(1-z)
-2u_3z^{2\tau+1}+(z+1)z^{2\tau}\right)}
{(1-z)\left((1-z^2)(1-z)-2u_3z^{2\tau+1}+(z+2)z^{2\tau}\right)^2}\right).
\end{split}
\end{equation}
\end{theorem}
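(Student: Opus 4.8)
The plan is to derive each of the three bivariate generating functions by \emph{symbolic inflation} of the shape generating function ${\bf I}_k(x,y)$ from eq.~(\ref{E:gfIk}). The overarching idea is that every $k$-noncrossing, $\tau$-canonical structure $\delta$ is uniquely reconstructed from its shape $\varphi(\delta)\in\mathcal{I}_k$ by (i) replacing each shape-arc by a stem of $\tau$ or more stacked arcs, (ii) replacing each shape-$1$-arc by a hairpin-loop (an arc together with an inserted interval of unpaired vertices), and (iii) inserting intervals of unpaired vertices into the remaining ``gaps'' of the shape, while respecting the canonicity constraints (no $1$-arcs, minimum stack-size $\tau$, no two consecutive stacks mergeable into one). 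The key point is to substitute for the formal variables $x$ and $y$ in ${\bf I}_k(x,y)$ the generating functions that track, respectively, the combinatorial cost of a generic shape-arc together with its surrounding unpaired regions and the cost of a $1$-arc, now weighted by $z$ to count vertices and by $u_i$ to count the loop-type of interest.

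First I would set up the ``gap'' bookkeeping precisely. Following the construction after eq.~(\ref{E:gfIk}), in a shape of length $2s$ with $m$ $1$-arcs there are $2s$ arc-endpoints and hence a fixed number of positions at which unpaired intervals may be inserted; each such interval contributes a factor $(1-z)^{-1}=\sum_{j\ge0}z^j$ when unconstrained. For the inflation one must distinguish the interval following a stem-closing from those elsewhere, because the no-merge condition on stems forbids an empty interval in certain spots, which replaces an unrestricted $(1-z)^{-1}$ by $\bigl((1-z)^{-1}-1\bigr)=z(1-z)^{-1}$ or similar. A stem of minimum stack-size $\tau$ expanding a single shape-arc contributes $z^{2\tau}/(1-z^2)$ (the $z^{2\tau}$ for the mandatory $\tau$ stacked pairs, the $1/(1-z^2)$ for the geometric series of additional pairs), again possibly modified at the innermost/outermost level. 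I would write $\vartheta(z,u_i)$ for the resulting substitution into $x$ and $\eta(z,u_i)$ for the one into $y$, then check that $\vartheta(0,u_i)=0$ so that the composition ${\bf F}_k(\vartheta(1+\vartheta)/(1+2\vartheta-\vartheta\eta)^2)$ is a well-defined formal power series, and finally simplify the rational prefactor $\frac{1+\vartheta}{1+2\vartheta-\vartheta\eta}$.

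The three cases differ only in which structural feature gets the extra variable. For hairpin-loops, $u_1$ marks precisely the shape-$1$-arcs, so $u_1$ enters only through the $y$-substitution (the hairpin contributing $z^{2\tau}\cdot z\cdot u_1$ roughly: $\tau$ pairs, at least one unpaired base forced by the absence of $1$-arcs, and the mark); one then verifies that setting $u_1=1$ collapses eq.~(\ref{E:bivagfh}) to the known univariate count. For interior-loops and bulges, the marked feature is instead a \emph{gap configuration inside a stem}: an interior-loop is two nonempty unpaired intervals flanking a nested stack on both strands, a bulge is one empty and one nonempty such interval (two orientations, explaining the factor $2u_3$). Hence $u_2$ and $u_3$ must be threaded through the \emph{stem} generating function — i.e. a stem is no longer $z^{2\tau}/(1-z^2)$ times a plain chain of sub-stacks but a sequence whose internal transitions are weighted by $u_2$ or $u_3$ according to whether the transition creates an interior-loop or a bulge — and this modified stem series is what gets substituted into $x$. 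I expect the main obstacle to be exactly this: correctly accounting for the boundary effects (outermost stem, innermost stem adjacent to a hairpin, stems adjacent to $1$-arcs) and the no-merge condition, so that no configuration is double-counted and the canonicity constraints are exactly imposed; the algebra is then a routine geometric-series summation and rational-function simplification, cross-checked by comparing low-order coefficients against direct enumeration and by the sanity check $u_i=1$.
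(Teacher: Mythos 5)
Your proposal follows essentially the same route as the paper: inflate each ${\sf lv}_k^{\sf 5}$-shape by substituting into ${\bf I}_k(x,y)$ a stem generating function for $x$ (with induced stacks separated by forced unpaired intervals, weighted $2u_3$ for the two bulge orientations and $u_2$ for the two-sided interior-loop case) and the marked hairpin contribution for $y$, exactly as in the paper's symbolic-enumeration proof via ${\bf T}_{\gamma}(z,u_1,u_2,u_3)$. The boundary complications you anticipate do not in fact arise, since the $m$ $1$-arc intervals are the only constrained gaps and all interior-loops and bulges occur strictly between consecutive stacks inside a single stem; otherwise your decomposition coincides with the paper's.
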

\begin{proof}
We prove the theorem via symbolic enumeration representing a $k$-noncrossing,
$\tau$-canonical structure as the inflation of a shape, $\gamma$.
Since a structure inflated from $\gamma\in \mathcal{I}_k(s,m)$ has
exactly $s$ stems, $(2s+1)$ (possibly empty) intervals of isolated
vertices and $m$ nonempty such intervals we rewrite the generating
functions as
\begin{eqnarray*}
{\bf H}_{k,\tau}(z,u_1)&=&\sum_{m\geq 0}\sum_{\gamma\in\,{\mathcal
I}_k(m)}{\bf T}_{\gamma}(z,u_1,1,1),\\
{\bf I}_{k,\tau}(z,u_2)&=&\sum_{m\geq 0}\sum_{\gamma\in\,{\mathcal
I}_k(m)}{\bf T}_{\gamma}(z,1,u_2,1),\\
{\bf B}_{k,\tau}(z,u_3)&=&\sum_{m\geq 0}\sum_{\gamma\in\,{\mathcal
I}_k(m)}{\bf T}_{\gamma}(z,1,1,u_3).
\end{eqnarray*}
where ${\bf T}_{\gamma}(z,u_1,u_2,u_3)$ is the generating function
of all $k$-noncrossing, $\tau$-canonical structures with shape
$\gamma$ and $u_i(i=1,2,3)$ are variables associated with the number of
hairpin-loops, interior-loops and bulges.
In order to compute the latter we consider the inflation
process:
we inflate $\gamma\in {\mathcal I}_k(m)$ having $s$ arcs,
where $s\geq m$, to a structure as follows:
\begin{itemize}
\item we inflate each arc of the shape to a stem of stacks of minimum size
$\tau$. Any isolated vertices inserted during this first inflation step
separate the added stacks,
\item we insert isolated vertices at the remaining $(2s+1)$ positions.
\end{itemize}
We inflate any shape-arc to a stack of size at least $\tau$ and
subsequently add additional stacks. The latter are called induced stacks
\index{induced stack} and have to be separated by means of inserting isolated
vertices, see Fig.~\ref{F:addstack}. Note that during this first inflation
step no intervals of isolated vertices, other than those necessary for
separating the nested stacks are inserted.
\begin{figure}[ht]
\centerline{\includegraphics[width=1\textwidth]{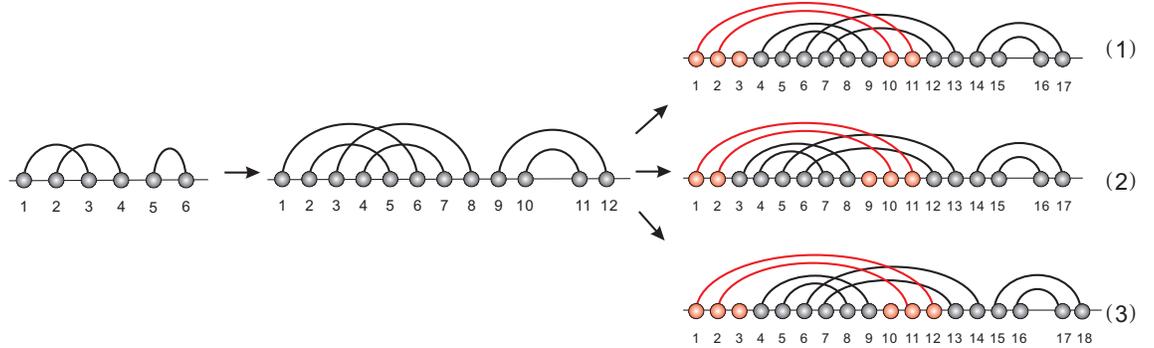}}
\caption{\small The first inflation step
a shape (left) is inflated to a $3$-noncrossing,
$2$-canonical structure. First, every arc in the shape is inflated to a
stack of size at least two (middle), and then the shape is inflated to a
new $3$-noncrossing, $2$-canonical structure (right) by adding one
stack of size two. There are three ways to insert the isolated vertices.}
\label{F:addstack}
\end{figure}
After the first inflation step we proceed inflating further by inserting
only additional isolated vertices at the remaining $(2s+1)$ positions
in which such insertions are possible. For each $1$-arc at least one
such isolated vertex is necessarily inserted, see Fig.~\ref{F:addvertex}.
\begin{figure}[ht]
\centerline{\includegraphics[width=1\textwidth]{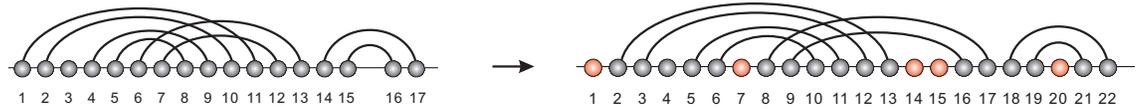}}
\caption{\small The second inflation step: the structure (left)
obtained in (1) in Fig.~\ref{F:addstack} is inflated to a new
$3$-noncrossing, $2$-canonical RNA structures (right) by adding isolated
vertices (red).} \label{F:addvertex}
\end{figure}
We proceed by expressing the above two inflations in terms of symbolic
enumeration. For this purpose we introduce the combinatorial classes
$\mathcal{M}$ (stems), $\mathcal{K}^{\tau}$ (stacks),
$\mathcal{N}^{\tau}$ (induced stacks), $\mathcal{L}$ (isolated vertices),
$\mathcal{R}$ (arcs) and $\mathcal{Z}$ (vertices), where ${\bf Z}(z)=z$
and ${\bf R}(z)=z^2$.
Let $\mu_1$, $\mu_2$ and $\mu_3$ be the labels for hairpin-loops,
interior-loops and
bulges, respectively. Then
\begin{eqnarray}
\mathcal{T}_{\gamma}&=&\left(\mathcal{M}\right)^s
\times\mathcal{L}^{2s+1-m}\times
\left([\mathcal{Z}\times\mathcal{L}]_{\mu_1}\right)^{m},\\
\mathcal{M}&=&\mathcal{K}^{\tau}\times
\textsc{Seq}\left(\mathcal{N}^{\tau}\right),\\
\mathcal{N}^{\tau}&=&\mathcal{K}^{\tau}\times
\left([\mathcal{Z}\times\mathcal{L}]_{\mu_3}
+[\mathcal{Z}\times\mathcal{L}]_{\mu_3}+[\left(\mathcal{Z}\times
\mathcal{L}\right)^2]_{\mu_2}\right),\\
\mathcal{K}^{\tau}&=&\mathcal{R}^{\tau}\times
\textsc{Seq}\left(\mathcal{R}\right),\\
\mathcal{L}&=& \textsc{Seq}\left(\mathcal{Z}\right).
\end{eqnarray}
and consequently, translating the above relations into generating functions
the generating function ${\bf T}_{\gamma}(z,u_1,u_2,u_3)$ is given by
\begin{equation}
\begin{split}
\left(\frac{\frac{z^{2\tau}}{1-z^2}}
{1-\frac{z^{2\tau}}{1-z^2}\left(2\frac{u_3\,z}{1-z}
+u_2\left(\frac{z}{1-z}\right)^2\right)}\right)^s
\left(\frac{1}{1-z}\right)^{2s+1-m}
\left(\frac{u_1\,z}{1-z}\right)^{m}\nonumber\\
= (1-z)^{-1}\left(\frac{z^{2\tau}}{(1-z^2)(1-z)^2-
(2u_3\,z\,(1-z)+u_2\,z^2)z^{2\tau}}\right)^s(u_1\,z)^m,
\end{split}
\end{equation}
where the indeterminants $u_i$ ($i=1,2,3$) correspond to the labels $\mu_i$,
i.e.~the occurrences of hairpin-loops, interior-loops and bulges.
Accordingly, for any two shapes $\gamma_1,\gamma_2\in
{\mathcal I}_k(m)$ having $s$ arcs, we have
\begin{equation}
{\bf T}_{\gamma_1}(z,u_1,u_2,u_3)={\bf T}_{\gamma_2}(z,u_1,u_2,u_3).
\end{equation}
We set
\begin{equation}
\eta(u_2,u_3)=\frac{z^{2\tau}}{(1-z^2)(1-z)^2-
(2u_3\,z\,(1-z)+u_2\,z^2)z^{2\tau}}.
\end{equation}
and accordingly derive
\begin{eqnarray*}
{\bf H}_{k,\tau}(z,u_1)&=&\sum_{m\geq 0}\sum_{\gamma\in\,{\mathcal
I}_k(m)}{\bf T}_{\gamma}(z,u_1,1,1)
=\sum_{s\geq 0}\sum_{m=0}^s i_k(s,m)
{\bf T}_{\gamma}(z,u_1,1,1),\\
{\bf I}_{k,\tau}(z,u_2)&=&\sum_{m\geq 0}\sum_{\gamma\in\,{\mathcal
I}_k(m)}{\bf T}_{\gamma}(z,1,u_2,1)=\sum_{s\geq 0}\sum_{m=0}^s
i_k(s,m){\bf
T}_{\gamma}(z,1,u_2,1),\\
{\bf B}_{k,\tau}(z,u_3)&=&\sum_{m\geq 0}\sum_{\gamma\in\,{\mathcal
I}_k(m)}{\bf T}_{\gamma}(z,1,1,u_3)=\sum_{s\geq 0}\sum_{m=0}^s
i_k(s,m){\bf T}_{\gamma}(z,1,1,u_3).
\end{eqnarray*}
It now remains to observe
\begin{equation*}
\sum_{s\geq0}\,\sum_{m=0}^{s}
\,i_k(s,m)\,x^s\,y^m=\frac{1+x}{1+2x-xy}{\bf F}_k
\left(\frac{x(1+x)}{(1+2x-xy)^2}\right).
\end{equation*}
and to subsequently substitute $x=\eta(1,1)$ and $y=u_1\,z$
for deriving ${\bf H}_{k,\tau}(z,u_1)$. Substituting $x=\eta(u_2,1)$
and $y=z$ in we obtain ${\bf I}_{k,\tau}(z,u_2)$ and finally
$x=\eta(1,u_3)$ and $y=z$ produce the expression for
${\bf B}_{k,\tau}(z,u_3)$, whence the theorem.
\end{proof}
\section{The central limit theorem}

For fixed $k$-noncrossing, $\tau$-canonical structure, $S$, let
$\mathbb{H}_{n,k,\tau}(S)$, $\mathbb{I}_{n,k,\tau}(S)$ and
$\mathbb{B}_{n,k,\tau}(S)$ denote the number of
hairpin-loops\index{hairpin-loop}, interior-loops\index{interior-loop}
and bulges\index{bulge} in $S$. Then we have the r.v.s
\begin{itemize}
\item $\mathbb{H}_{n,k,\tau}$, where
$\mathbb{P}\left(\mathbb{H}_{n,k,\tau}=t\right) =
\frac{h_{k,\tau}(n,t)}{{\sf T}_{k,\tau}(n)}$
\item $\mathbb{I}_{n,k,\tau}$, where
$\mathbb{P}\left(\mathbb{I}_{n,k,\tau}=t\right)=
\frac{i_{k,\tau}(n,t)}{{\sf T}_{k,\tau}(n)}$
\item $\mathbb{B}_{n,k,\tau}$, where
$\mathbb{P}\left(\mathbb{B}_{n,k,\tau}=t\right)
=\frac{b_{k,\tau}(n,t)}{{\sf T}_{k,\tau}(n)}.$
\end{itemize}
Here $h_{k,\tau}(n,t)$, $i_{k,\tau}(n,t)$ and $b_{k,\tau}(n,t)$
are the numbers of $k$-noncrossing, $\tau$-canonical structures
of length $n$ with $t$ hairpin-loops, interior-loops and bulges. The key
for computing the distributions of the above r.v.s are the bivariate
generating functions derived in Theorem~\ref{T:hairpin}:
\begin{eqnarray}
{\bf H}_{k,\tau}(z,u_1)&=&\sum_{n\geq 0}\sum_{t\geq 0}
h_{k,\tau}(n,t)z^n\,u_1^t,\\
{\bf I}_{k,\tau}(z,u_2)&=&\sum_{n\geq 0}\sum_{t\geq 0}
i_{k,\tau}(n,t)z^n\,u_2^t,\\
{\bf B}_{k,\tau}(z,u_3)&=&\sum_{n\geq 0}\sum_{t\geq
0}b_{k,\tau}(n,t)z^n\,u_3^t.
\end{eqnarray}
The following proposition is based on Theorem~\ref{T:uniform} and
facilitates the application of Theorem~\ref{T:normal}.
\begin{proposition}\label{P:zcoeff}
Suppose  $2\le k\le 7$, $1\le \tau\le 10$. There exists a unique
dominant ${\bf H}_{k,\tau}(z,e^s)$-singularity, $\gamma_{k,\tau}(s)$,
such that for $|s|<\epsilon$, where $\epsilon>0$:\\
{\sf (1)} $\gamma_{k,\tau}(s)$ is analytic, \\
{\sf (2)} $\gamma_{k,\tau}(s)$ is the solution of minimal modulus of
\begin{equation}
\frac{z^{2\tau}(1-z)^2(1-z^2+z^{2\tau})}
{\left((1-z)^2(1-z^2+z^{2\tau})
+z^{2\tau}-z^{2\tau+1}e^s\right)^2}-\rho_k^2=0.
\end{equation}
and
\begin{equation}\label{E:coeffuni}
[z^n] {\bf H}_{k,\tau}(z,e^s)\sim C(s)\, n^{-((k-1)^2+\frac{k-1}{2})}
\left( \frac{1}{\gamma_{k,\tau}(s)} \right)^n,
\end{equation}
uniformly in $s$ in a neighborhood of $0$ and continuous $C(s)$.
\end{proposition}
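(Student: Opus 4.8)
The plan is to apply Theorem~\ref{T:uniform} with $k$ fixed in $\{2,\dots,7\}$ and with the auxiliary function
\begin{equation*}
\psi(z,s)=\frac{z^{2\tau}(1-z)^2(1-z^2+z^{2\tau})}
{\left((1-z)^2(1-z^2+z^{2\tau})+z^{2\tau}-z^{2\tau+1}e^s\right)^2},
\end{equation*}
which is precisely the argument of ${\bf F}_k$ appearing in ${\bf H}_{k,\tau}(z,e^s)$ after the substitution $u_1=e^s$ in eq.~(\ref{E:bivagfh}). First I would record that $\psi(z,s)$ is analytic in a bidisk $\mathcal D=\{|z|\le r,\ |s|<\epsilon\}$: the denominator is a polynomial in $z$ times a unit, and at $s=0$, $z=0$ the denominator equals $1\neq 0$, so by continuity one may choose $r,\epsilon$ small enough that the denominator stays bounded away from $0$ on $\mathcal D$ (here the prefactor $(1-z)(1-z^2+z^{2\tau})/\bigl((1-z)^2(1-z^2+z^{2\tau})+z^{2\tau}-z^{2\tau+1}e^s\bigr)$ is also analytic on $\mathcal D$ for the same reason, so ${\bf H}_{k,\tau}(z,e^s)$ and ${\bf F}_k(\psi(z,s))$ share singularities). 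Also $\psi(0,s)=0$ since the numerator carries the factor $z^{2\tau}$ with $\tau\ge 1$.

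Next I would establish that the equation $\psi(\gamma(s),s)=\rho_k^2$ has an analytic solution $\gamma_{k,\tau}(s)$ near $s=0$ with $|\gamma_{k,\tau}(s)|\le r$, which gives parts {\sf (1)} and {\sf (2)}. At $s=0$ the generating function ${\bf H}_{k,\tau}(z,1)$ is the univariate counting series of $k$-noncrossing $\tau$-canonical structures, whose dominant singularity $\gamma_{k,\tau}(0)=:\rho$ is known from the earlier work on these structures to be the minimal positive root of $\psi(z,0)=\rho_k^2$ with $\partial_z\psi(\rho,0)\neq 0$ (this is the real positive dominant singularity guaranteed by Pringsheim's theorem applied to the series with nonnegative coefficients, together with the D-finiteness/transfer setup behind Theorem~\ref{T:uniform}); for $2\le k\le 7$ and $1\le\tau\le 10$ this is a finite check. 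Since $\partial_z\psi(\rho,0)\neq 0$, the analytic implicit function theorem \cite{Flajolet:07a} applied to $G(z,s)=\psi(z,s)-\rho_k^2$ yields a unique analytic branch $\gamma_{k,\tau}(s)$ with $\gamma_{k,\tau}(0)=\rho$, defined for $|s|<\epsilon$, and after shrinking $\epsilon$ we keep $|\gamma_{k,\tau}(s)|\le r$ and $\partial_z\psi(\gamma_{k,\tau}(s),s)\neq 0$. To promote this to ``$\gamma_{k,\tau}(s)$ is the \emph{unique dominant} singularity'' I would argue by continuity: at $s=0$ the dominant singularity is unique and strictly smaller in modulus than any other singularity of ${\bf H}_{k,\tau}(z,1)$ (the other roots of the denominator and of $\psi(z,0)=\rho_k^2$, and the images under ${\bf F}_k$ of its non-dominant singularities $\rho'^2$ with $\rho'\in R_k$, $\rho'\neq\rho_k$), so for $\epsilon$ small this strict separation of moduli persists and $\gamma_{k,\tau}(s)$ remains the unique singularity of smallest modulus.

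With these hypotheses verified, Theorem~\ref{T:uniform} applies directly to ${\bf F}_k(\psi(z,s))$ and gives
\begin{equation*}
[z^n]{\bf F}_k(\psi(z,s))\sim A(s)\,n^{-((k-1)^2+(k-1)/2)}\gamma_{k,\tau}(s)^{-n}
\end{equation*}
uniformly for $s$ near $0$ with continuous $A(s)$; multiplying by the analytic (hence locally bounded, continuous, nonvanishing near $z=\gamma_{k,\tau}(0)$) prefactor of ${\bf H}_{k,\tau}$ only rescales $A(s)$ by its value at $z=\gamma_{k,\tau}(s)$, which is continuous in $s$, yielding eq.~(\ref{E:coeffuni}) with the stated $C(s)$. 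I expect the main obstacle to be the second paragraph: cleanly justifying that $\gamma_{k,\tau}(0)$ is the \emph{dominant} singularity of ${\bf H}_{k,\tau}(z,1)$ and that it is a \emph{simple} root of $\psi(z,0)=\rho_k^2$ (so $\partial_z\psi\neq 0$), and that no other singularity ties it in modulus — this is where the restriction $2\le k\le 7$, $1\le\tau\le 10$ is used, essentially as a finite symbolic/numeric verification, and where one must be careful that the branch coming from the implicit function theorem is the one of minimal modulus rather than a spurious larger root.
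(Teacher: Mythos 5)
Your proposal is correct and follows essentially the same route as the paper: apply the analytic implicit function theorem to $\psi(z,s)-\rho_k^2$ at the minimal positive root for $s=0$, rule out the competing singularities (the roots of the denominator $\vartheta(z,s)$ and the preimages under $\psi$ of the non-dominant elements of $R_k$) by a continuity/strict-separation-of-moduli argument resting on the finite verification for $2\le k\le 7$, $1\le\tau\le 10$, and then invoke Theorem~\ref{T:uniform}. Your explicit handling of the analytic prefactor multiplying ${\bf F}_k(\psi(z,s))$ is a small point the paper passes over silently, but otherwise the two arguments coincide.
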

\begin{proof}
The first step is to establish the existence and uniqueness of the
dominant singularity $\gamma_{k,\tau}(s)$.\\
We denote
\begin{eqnarray}
\vartheta(z,s)&=&(1-z)^2(1-z^2+z^{2\tau})
+z^{2\tau}-z^{2\tau+1}e^s,\\
\psi_{\tau}(z,s)&=&z^{2\tau}
(1-z)^2(1-z^2+z^{2\tau})\vartheta(z,s)^{-2},\\
\omega_{\tau}(z,s)&=&(1-z)(1-z^2+z^{2\tau})\vartheta(z,s)^{-1},
\end{eqnarray}
and consider the equations
\begin{equation}
\forall \, 2\le i\le k;\qquad F_{i,\tau}(z,s)=\psi_{\tau}(z,s)-\rho_i^2,
\end{equation}
where $\rho_i=1/(2i-2)$.
Theorem~\ref{T:hairpin} and Table~\ref{Table:polyroot} imply that the
singularities of ${\bf H}_{k,\tau}(z,e^s)$ are
are contained in the set of roots of
\begin{equation}
F_{i,\tau}(z,s)=0\quad\text{and}\quad\vartheta(z,s)=0
\end{equation}
where $i\leq k$.
Let $r_{i,\tau}$ denote the solution of minimal modulus of
\begin{equation}
F_{i,\tau}(z,0)=\psi_{\tau}(z,0)-\rho_i^2=0.
\end{equation}
We next verify that, for sufficiently small $\epsilon_i>0$,
$|z-r_{i,\tau}|<\epsilon_i$, $|s|<\epsilon_i$, the following
assertions hold
\begin{itemize}
\item $\frac{\partial}{\partial z} F_{i,\tau}(r_{i,\tau},0)\neq 0$
\item $\frac{\partial}{\partial z} F_{i,\tau}(z,s)$ and
      $\frac{\partial}{\partial s}F_{i,\tau}(z,s)$ are
      continuous.
\end{itemize}
The analytic implicit function theorem, guarantees the existence of a unique
analytic function $\gamma_{i,\tau}(s)$ such that, for $|s|<\epsilon_i$,
\begin{equation}
F_{i,\tau}(\gamma_{i,\tau}(s),s)=0\quad\text{ and }\quad
\gamma_{i,\tau}(0)=r_{i,\tau}.
\end{equation}
Analogously, we obtain the unique analytic function $\delta(s)$ satisfying
$\vartheta(z,s)=0$ and where $\delta(0)$ is the minimal solution of
$\vartheta(z,0)=0$ for $|s|<\epsilon_{\delta}$, for some
$\epsilon_{\delta}>0$.
We next verify that the unique dominant singularity of
${\bf H}_{k,\tau}(z,1)$ is the minimal positive solution $r_{k,\tau}$ of
$F_{k,\tau}(z,0)=0$ and subsequently using an continuity argument.
Therefore, for sufficiently small $\epsilon$ where
$\epsilon<\epsilon_i$ and $\epsilon<\epsilon_{\delta}$,
$|s|<\epsilon$, the module of $\gamma_{i,\tau}(s)$, $i<k$
and $\delta(s)$ are all strictly larger than the modulus of
$\gamma_{k,\tau}(s)$. Consequently, $\gamma_{k,\tau}(s)$ is
the unique dominant singularity of ${\bf H}_{k,\tau}(z,e^s)$.\\
{\it Claim.} There exists some continuous $C(s)$ such that, uniformly in
$s$, for $s$ in a neighborhood of $0$
\begin{equation*}
[z^n] {\bf H}_{k,\tau}(z,e^s)\sim C(s)\, n^{-((k-1)^2+\frac{k-1}{2})}
\left( \frac{1}{\gamma_{k,\tau}(s)} \right)^n.
\end{equation*}
To prove the Claim, let $r$ be some positive real number such that
$r_{k,\tau}<r<\delta(0)$.
For sufficiently small $\epsilon>0$ and $|s|<\epsilon$,
$$
|\gamma_{k,\tau}(s)|\leq r\quad\text{and}\quad |\delta(s)|>r.
$$
Then $\psi_{\tau}(z,s)$ and $\omega_{\tau}(z,s)$ are all analytic in
$\mathcal{D}=\{(z,s)||z|\leq r,|s|<\epsilon\}$ and $\psi_{\tau}(0,s)=0$.
Since $\gamma_{k,\tau}(s)$ is the unique dominant singularity of
\begin{equation*}
{\bf H}_{k,\tau}(z,e^s)=\omega_{\tau}(z,s)
\,{\bf F}_k(\psi_{\tau}(z,s)),
\end{equation*}
satisfying
\begin{equation}
\psi_{\tau}(\gamma_{k,\tau}(s),s)=\rho_k^2\quad\text{and}\quad
|\gamma_{k,\tau}(s)|\leq r,
\end{equation}
for $|s|<\epsilon$. For sufficiently small $\epsilon>0$,
$\frac{\partial}{\partial z} F_{k,\tau}(z,s)$ is continuous and
$\frac{\partial}{\partial z}F_{k,\tau}(r_{k,\tau},0)\neq 0$.
Thus there exists some $\epsilon>0$, such that for $|s|<\epsilon$,
$\frac{\partial}{\partial z} F_{k,\tau}(\gamma_{k,\tau}(s),s)\neq 0$.
According to Theorem~\ref{T:uniform}, we therefore derive
\begin{equation}
[z^n] {\bf H}_{k,\tau}(z,e^s)\sim C(s)\, n^{-((k-1)^2+\frac{k-1}{2})}
\left( \frac{1}{\gamma_{k,\tau}(s)} \right)^n,
\end{equation}
uniformly in $s$ in a neighborhood of $0$ and continuous $C(s)$.
\end{proof}


After establishing the analogues of Proposition~\ref{P:zcoeff} for
$\mathbf{I}_{k,\tau}(z,u)$ and $\mathbf{B}_{k,\tau}(z,u)$, see the
Supplemental Materials, Theorem~\ref{T:normal} implies
the following central limit theorem for the distributions of
hairpin-loops, interior-loops\index{interior-loop} and
bulges\index{bulge} in $k$-noncrossing structures.
\begin{theorem}\label{T:gauss-hairpin}
Let $k,\tau\in \mathbb{N}$, $2\leq k\leq 7$, $1\leq\tau\leq 10$ and suppose the
random variable $\mathbb{X}$ denotes either $\mathbb{H}_{n,k,\tau}$,
$\mathbb{I}_{n,k,\tau}$ or $\mathbb{B}_{n,k,\tau}$.
Then there exists a pair
\begin{equation*}
(\mu_{k,\tau,\mathbb{X}},
\sigma_{k,\tau,\mathbb{X}}^{2})
\end{equation*}
such that the normalized random variable
$\mathbb{X}^*$ has asymptotically normal distribution
with parameter $(0,1)$, where $\mu_{k,\tau,\mathbb{X}}$ and
$\sigma_{k,\tau,\mathbb{X}}^2$ are given by
\begin{equation}\label{E:was-hairpin}
\mu_{k,\tau,\mathbb{X}}=
-\frac{\gamma_{k,\tau,\mathbb{X}}'(0)}{\gamma_{k,\tau,\mathbb{X}}(0)},
\qquad  \sigma_{k,\tau,\mathbb{X}}^2=
\left(\frac{\gamma_{k,\tau,\mathbb{X}}'(0)}{\gamma_{k,\tau,\mathbb{X}}(0)}
\right)^2-\frac{\gamma_{k,\tau,\mathbb{X}}''(0)}
{\gamma_{k,\tau,\mathbb{X}}(0)},
\end{equation}
where $\gamma_{k,\tau,\mathbb{X}}(s)$ represents
the unique dominant singularity of
 ${\bf H}_{k,\tau}(z,e^s)$, ${\bf I}_{k,\tau}(z,e^s)$, and
${\bf B}_{k,\tau}(z,e^s)$, respectively.
\end{theorem}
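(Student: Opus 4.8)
The plan is to derive Theorem~\ref{T:gauss-hairpin} from Bender's central limit theorem, Theorem~\ref{T:normal}, once the uniform coefficient asymptotics of the form~(\ref{E:knackpunkt}) are available for each of the three bivariate generating functions. For $\mathbb{X}=\mathbb{H}_{n,k,\tau}$ this is precisely Proposition~\ref{P:zcoeff}; for $\mathbb{X}=\mathbb{I}_{n,k,\tau}$ and $\mathbb{X}=\mathbb{B}_{n,k,\tau}$ one proves the verbatim analogues (relegated to the Supplemental Materials), with $\psi_{\tau}$, $\omega_{\tau}$, $\vartheta$ replaced by the rational functions read off from the second and third displays of Theorem~\ref{T:hairpin}. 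So the first step is to record that in each case the hypotheses of Theorem~\ref{T:uniform} are met: the argument $\psi(z,s)$ of ${\bf F}_k$ (with the respective $u_i$ set to $e^s$) is analytic on a polydisc $\mathcal{D}=\{(z,s)\,|\,|z|\le r,\ |s|<\epsilon\}$ with $\psi(0,s)=0$, the candidate $\gamma_{k,\tau,\mathbb{X}}(s)$ is the analytic solution of minimal modulus of $\psi(\gamma_{k,\tau,\mathbb{X}}(s),s)=\rho_k^2$ with $|\gamma_{k,\tau,\mathbb{X}}(s)|\le r$, and $\partial_z\psi(\gamma_{k,\tau,\mathbb{X}}(s),s)\neq 0$ for $|s|<\epsilon$.

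Second, I would invoke Theorem~\ref{T:uniform} (equivalently, Proposition~\ref{P:zcoeff} and its analogues) to obtain
\begin{equation*}
[z^n]\,g(z,e^s)\sim C(s)\,n^{-((k-1)^2+(k-1)/2)}\,\gamma_{k,\tau,\mathbb{X}}(s)^{-n}
\end{equation*}
uniformly for $s$ near $0$, where $g$ denotes ${\bf H}_{k,\tau}$, ${\bf I}_{k,\tau}$ or ${\bf B}_{k,\tau}$ and $C(s)$ is continuous. Before applying Theorem~\ref{T:normal} one checks the two remaining bookkeeping points: (i) $C(s)$ is nonzero near $0$, which is automatic, since at $s=0$ the left-hand side equals ${\sf T}_{k,\tau}(n)>0$, forcing $C(0)>0$, and continuity propagates nonvanishing to a neighborhood; (ii) $\gamma_{k,\tau,\mathbb{X}}(s)$ is analytic near $0$ with $\gamma_{k,\tau,\mathbb{X}}(0)=r_{k,\tau}\in(0,1)$ nonzero, analyticity being part of Proposition~\ref{P:zcoeff} and its analogues via the analytic implicit function theorem applied to $F_{k,\tau}$.

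Third, with $\alpha=-((k-1)^2+(k-1)/2)$ a constant, $c(s)=C(s)$ continuous and nonvanishing, and $\gamma(s)=\gamma_{k,\tau,\mathbb{X}}(s)$ analytic near $0$, condition~(\ref{E:knackpunkt}) holds. Hence Theorem~\ref{T:normal} applies to $\mathbb{X}$, whose law is $\mathbb{P}(\mathbb{X}=t)=g(n,t)/{\sf T}_{k,\tau}(n)$, matching the setup there with $f(n)=\sum_t g(n,t)={\sf T}_{k,\tau}(n)$. It yields asymptotic normality of $\mathbb{X}^*$ with parameter $(0,1)$, and the formulas $\mu=-\gamma'(0)/\gamma(0)$, $\sigma^2=(\gamma'(0)/\gamma(0))^2-\gamma''(0)/\gamma(0)$ of~(\ref{E:was}) become exactly~(\ref{E:was-hairpin}) upon writing $\gamma=\gamma_{k,\tau,\mathbb{X}}$. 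Running this argument for $\mathbb{H}_{n,k,\tau}$, $\mathbb{I}_{n,k,\tau}$ and $\mathbb{B}_{n,k,\tau}$ in turn proves the theorem.

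The main obstacle is not this final assembly, which is essentially formal, but the input hypothesis that $\gamma_{k,\tau,\mathbb{X}}(s)$ is genuinely the \emph{dominant} singularity, uniformly for small $s$. As in the proof of Proposition~\ref{P:zcoeff}, one must rule out the competing candidates: the roots $\gamma_{i,\tau}(s)$ of $F_{i,\tau}(z,s)=\psi_{\tau}(z,s)-\rho_i^2=0$ for $i<k$, and the poles of the rational prefactor $\omega_{\tau}$, namely the root $\delta(s)$ of $\vartheta(z,s)=0$ together with its interior-loop and bulge counterparts. This reduces to showing that $r_{k,\tau}=\gamma_{k,\tau,\mathbb{X}}(0)$ has strictly smaller modulus than all these competitors at $s=0$, then extending by continuity; the comparison is not uniform in $k$ and $\tau$, which is why the statement is confined to $2\le k\le 7$ and $1\le\tau\le 10$ and is verified case by case using the data of Table~\ref{Table:polyroot}.
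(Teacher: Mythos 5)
Your proposal is correct and follows exactly the paper's route: the theorem is obtained by combining Proposition~\ref{P:zcoeff} (and its verbatim analogues for ${\bf I}_{k,\tau}$ and ${\bf B}_{k,\tau}$ from the Supplemental Materials) with Bender's Theorem~\ref{T:normal}, the only substantive input being the uniform asymptotics and the analyticity of the dominant singularity $\gamma_{k,\tau,\mathbb{X}}(s)$. Your additional bookkeeping remarks (nonvanishing of $C(s)$ near $0$ via $C(0)>0$, and the case-by-case dominance check explaining the restriction to $2\le k\le 7$, $1\le\tau\le 10$) are consistent with, and slightly more explicit than, what the paper records.
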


In Tables~\ref{Tab:data-hairpin}, \ref{Tab:data-interior} and
\ref{Tab:data-bulge} we present the values of the pairs
$(\mu_{k,\tau,\mathbb{X}},\sigma_{k,\tau,\mathbb{X}}^{2})$.

\begin{table}
\begin{center}
\begin{tabular}{|ccccccc|cc|}
\hline
&\multicolumn{2}{c}{$k=2$} &\multicolumn{2}{c}{$k=3$}        &
\multicolumn{2}{c|}{$k=4$} \\
\hline
& $\mu_{k,\tau}$ & $\sigma_{k,\tau}^2$ & $\mu_{k,\tau}$ & $\sigma_{k,\tau}^2$
& $\mu_{k,\tau}$ & $\sigma_{k,\tau}^2$ \\
\hline
$\tau=1$ & 0.105573 & 0.032260 & 0.012013 & 0.011202 & 0.003715 & 0.003641 \\
$\tau=2$ & 0.061281 & 0.018116 & 0.009845 & 0.008879 & 0.003734 & 0.003602 \\
$\tau=3$ & 0.043900 & 0.012752 & 0.007966 & 0.007060 & 0.003200 & 0.003060 \\
$\tau=4$ & 0.034477 & 0.009896 & 0.006680 & 0.005854 & 0.002757 & 0.002622 \\
\hline
&\multicolumn{2}{c}{$k=5$}  &\multicolumn{2}{c}{$k=6$}                        & \multicolumn{2}{c|}{$k=7$} \\
\hline
 & $\mu_{k,\tau}$ & $\sigma_{k,\tau}^2$ & $\mu_{k,\tau}$ & $\sigma_{k,\tau}^2$ & $\mu_{k,\tau}$ & $\sigma_{k,\tau}^2$ \\
\hline
$\tau=1$ & 0.001626 & 0.001612 & 0.000855 & 0.000852& 0.000505 & 0.000504 \\
$\tau=2$ & 0.001897 & 0.001864 & 0.001123 & 0.001111 & 0.000731 & 0.000726 \\
$\tau=3$ & 0.001693 & 0.001655 & 0.001035 & 0.001021 & 0.000692 & 0.000686 \\
$\tau=4$ & 0.001486 & 0.001448 & 0.000922 & 0.000907 & 0.000624 & 0.000618 \\
\hline
\end{tabular}
\centerline{}
\smallskip
\caption{\small {\sf Hairpin-loops:}
The central limit theorem for the numbers of hairpin-loops in $k$-noncrossing,
$\tau$-canonical structures. We list $\mu_{k,\tau}$ and
$\sigma^2_{k,\tau}$ derived from eq.~(\ref{E:was-hairpin}).}
\label{Tab:data-hairpin}
\end{center}
\end{table}

\begin{table}
\begin{center}
\begin{tabular}{|ccccccc|cc|}
\hline
&\multicolumn{2}{c}{$k=2$} &\multicolumn{2}{c}{$k=3$}  &
\multicolumn{2}{c|}{$k=4$} \\
\hline
& $\mu_{k,\tau}$ & $\sigma_{k,\tau}^2$ & $\mu_{k,\tau}$ &
$\sigma_{k,\tau}^2$ & $\mu_{k,\tau}$ & $\sigma_{k,\tau}^2$ \\
\hline
$\tau=1$ & 0.015403 & 0.013916 & 0.001185 & 0.001176 & 0.000264 & 0.000264 \\
$\tau=2$ & 0.012959 & 0.011395 & 0.001823 & 0.001793 & 0.000603 & 0.000599 \\
$\tau=3$ & 0.011075& 0.009570 & 0.001878 & 0.001837 & 0.000693 & 0.000688 \\
$\tau=4$ & 0.009682 & 0.008261 & 0.001803 & 0.001755 & 0.000700 & 0.000693 \\
\hline
&\multicolumn{2}{c}{$k=5$}  &\multicolumn{2}{c}{$k=6$}                        & \multicolumn{2}{c|}{$k=7$} \\
\hline
 & $\mu_{k,\tau}$ & $\sigma_{k,\tau}^2$ & $\mu_{k,\tau}$ &
$\sigma_{k,\tau}^2$ & $\mu_{k,\tau}$ & $\sigma_{k,\tau}^2$ \\
\hline
$\tau=1$ & 0.000090 & 0.000090& 0.000039 & 0.000039& 0.000019 & 0.000019 \\
$\tau=2$ & 0.000275 & 0.000274  & 0.000149 & 0.000149 & 0.000090 & 0.000090 \\
$\tau=3$ & 0.000343 & 0.000341 & 0.000198 & 0.000198 & 0.000126 & 0.000126 \\
$\tau=4$ & 0.000359 & 0.000357 & 0.000214 & 0.000213 & 0.000140 & 0.000140 \\
\hline
\end{tabular}
\centerline{}
\smallskip
\caption{\small {\sf Interior-loops:}
The central limit theorem for the numbers of interior-loops in
$k$-noncrossing,$\tau$-canonical structures. We list $\mu_{k,\tau}$ and
$\sigma^2_{k,\tau}$ derived from eq.~(\ref{E:was-hairpin}).}
\label{Tab:data-interior}
\end{center}
\end{table}
\begin{table}
\begin{center}
\begin{tabular}{|ccccccc|cc|}
\hline
&\multicolumn{2}{c}{$k=2$} &\multicolumn{2}{c}{$k=3$}&
\multicolumn{2}{c|}{$k=4$} \\
\hline
& $\mu_{k,\tau}$ & $\sigma_{k,\tau}^2$ & $\mu_{k,\tau}$ &
$\sigma_{k,\tau}^2$ & $\mu_{k,\tau}$ & $\sigma_{k,\tau}^2$ \\
\hline
$\tau=1$ & 0.049845 & 0.042310 & 0.008982 & 0.008684 & 0.003094& 0.003058 \\
$\tau=2$ & 0.025088 & 0.021785 & 0.005789 & 0.005597 & 0.002457 & 0.002422 \\
$\tau=3$ & 0.015859 & 0.013979 & 0.003936 & 0.003814 & 0.001762 & 0.001737 \\
$\tau=4$ & 0.011197 & 0.009980 & 0.002878 &0.002795& 0.001318 & 0.001301  \\
\hline
&\multicolumn{2}{c}{$k=5$}  &\multicolumn{2}{c}{$k=6$} &
\multicolumn{2}{c|}{$k=7$} \\
\hline
& $\mu_{k,\tau}$ & $\sigma_{k,\tau}^2$ & $\mu_{k,\tau}$ & $\sigma_{k,\tau}^2$
& $\mu_{k,\tau}$ & $\sigma_{k,\tau}^2$ \\
\hline
$\tau=1$ & 0.001422 & 0.001414 & 0.000770 & 0.000767& 0.000463& 0.000462 \\
$\tau=2$ & 0.001326 & 0.001316 & 0.000817 & 0.000813 & 0.000547 & 0.000546 \\
$\tau=3$ & 0.000991 & 0.000984 & 0.000632 & 0.000629 & 0.000436 & 0.000435 \\
$\tau=4$ & 0.000755 & 0.000750 & 0.000489 & 0.000486 & 0.000342 & 0.000341\\
\hline
\end{tabular}
\centerline{}
\smallskip
\caption{\small {\sf Bulges:}
The central limit theorems for the numbers of bulges in $k$-noncrossing,
$\tau$-canonical structures. We list $\mu_{k,\tau}$ and
$\sigma^2_{k,\tau}$ derived from eq.~(\ref{E:was-hairpin}).}
\label{Tab:data-bulge}
\end{center}
\end{table}

\section{Proof of Theorem~\ref{T:uniform}}\label{S:proof}

{\textbf{Proof of Theorem~\ref{T:uniform}.}}
We consider the composite function ${\bf F}_k(\psi(z,s))$. In view of
$[z^n]f(z,s)=\gamma^n [z^n]f(\frac{z}{\gamma},s)$ it suffices to analyze
the function ${\bf F}_k(\psi(\gamma(s)z,s))$ and to subsequently rescale
in order to obtain the correct exponential factor. For this purpose we set
\begin{equation*}
\widetilde{\psi}(z,s)=\psi(\gamma(s)z,s),
\end{equation*}
where $\psi(z,s)$ is analytic in a domain
$\mathcal{D}=\{(z,s)||z|\leq r,|s|<\epsilon\}$. Consequently
$\widetilde{\psi}(z,s)$ is analytic in $|z|<\widetilde{r}$ and
$|s|<\widetilde{\epsilon}$, for some
$1<\widetilde{r},\,0<\widetilde{\epsilon}<\epsilon$,
since it's a composition
of two analytic functions in $\mathcal{D}$. Taking its Taylor
expansion at $z=1$,
\begin{equation}\label{E:taylor}
\widetilde{\psi}(z,s)=\sum_{n\geq0}\widetilde{\psi}_n(s)(1-z)^n,
\end{equation}
where $\widetilde{\psi}_n(s)$ is analytic in $|s|<\widetilde{\epsilon}$.
The singular expansion of ${\bf F}_k(z)$, $2\leq k\leq 7$,
for $z\rightarrow \rho_k^2$, follows from the ODEs, see eq.~(\ref{E:ODE}),
and is given by
\begin{equation}
{\bf F}_k(z)=
\begin{cases}
P_k(z-\rho_k^2)+c_k'(z-\rho_k^2)^{((k-1)^2+(k-1)/2)-1}
\log^{}(z-\rho_k^2)\left(1+o(1)\right)  \\
P_k(z-\rho_k^2)+c_k'(z-\rho_k^2)^{((k-1)^2+(k-1)/2)-1}
\left(1+o(1)\right)
\end{cases}
\end{equation}
depending on whether $k$ is odd or even and where $P_k(z)$ are polynomials
of degree $\le (k-1)^2+(k-1)/2-1$, $c_k'$ is some constant, and
$\rho_k=1/2(k-1)$.
By assumption, $\gamma(s)$ is the unique analytic solution of
$\psi(\gamma(s),s)=\rho_k^2$ and by construction
${\bf F}_k(\psi(\gamma(s)z,s))={\bf F}_k(\widetilde{\psi}(z,s))$.
In view of eq.~(\ref{E:taylor}), we have for $z\rightarrow 1$ the
expansion
\begin{equation}\label{E:uinner}
\widetilde{\psi}(z,s)-\rho_k^2
=\sum_{n\geq1}\widetilde{\psi}_n(s)(1-z)^n= \widetilde{\psi}_1(s)(1-z)(1+o(1)),
\end{equation}
that is uniform in $s$ since $\widetilde{\psi}_n(s)$ is analytic for
$|s|<\widetilde{\epsilon}$ and
$\widetilde{\psi}_0(s)=\psi(\gamma(s),s)=\rho_k^2$.
As for the singular expansion of ${\bf F}_k(\widetilde{\psi}(z,s))$
we derive, substituting the eq.~(\ref{E:uinner})
into the singular expansion of ${\bf F}_k(z)$, for $z\rightarrow 1$,
\begin{equation}
\begin{cases}
\widetilde{P}_k(z,s)+c_k(s)(1-z)^{((k-1)^2+(k-1)/2)-1}
\log^{}(1-z)\left(1+o(1)\right) & \text{\rm for
$k$ odd} \\
\widetilde{P}_k(z,s)+c_k(s)(1-z)^{((k-1)^2+(k-1)/2)-1}
\left(1+o(1)\right) & \text{\rm for $k$ even}
\end{cases}
\end{equation}
where $\widetilde{P}_k(z,s)=P_k(\widetilde{\psi}(z,s)-\rho_k^2)$ and
$c_k(s)=c_k'\widetilde{\psi}_1(s)^{((k-1)^2+(k-1)/2)-1}$ and
\begin{equation*}
\widetilde{\psi}_1(s)=\partial_z\widetilde{\psi}(z,s)|_{z=1}
=\gamma(s)\partial_z\psi(\gamma(s),s)\neq 0 \quad \text{\rm for }
\vert s\vert<\epsilon.
\end{equation*}
Furthermore $\widetilde{P}_k(z,s)$ is analytic at
$|z|\leq 1$, whence $[z^n]\widetilde{P}_k(z,s)$ is exponentially small
compared to $1$.
Therefore we arrive at
\begin{equation}\label{E:coeffbiva}
[z^n]{\bf F}_k(\widetilde{\psi}(z,s))\sim
\begin{cases}
[z^n]c_k(s)(1-z)^{((k-1)^2+(k-1)/2)-1}
\log^{}(1-z)\left(1+o(1)\right)  \\
[z^n]c_k(s)(1-z)^{((k-1)^2+(k-1)/2)-1} \left(1+o(1)\right)
\end{cases}
\end{equation}
depending on $k$ being odd or even and uniformly in $|s|<\widetilde{\epsilon}$.
We observe that $c_k(s)$ is analytic in $|s|<\widetilde{\epsilon}$.
Note that a dependency in the parameter $s$ is only given in the
coefficients $c_k(s)$, that are analytic in $s$.
Standard transfer theorems \cite{Flajolet:07a} imply that
\begin{equation}
[z^n]{\bf F}_k(\widetilde{\psi}(z,s))\sim
A(s)\,n^{-((k-1)^2+(k-1)/2)}\quad \text{\rm for some
$A(s)\in\mathbb{C}$},
\end{equation}
uniformly in $s$ contained in a small neighborhood of $0$.
Finally, as mention in the beginning of the proof, we use the scaling property
of Taylor expansions in order to derive
\begin{equation}
[z^n]{\bf F}_k(\psi(z,s))=\left(\gamma(s)\right)^{-n}[z^n]{\bf
F}_k(\widetilde{\psi}(z,s))
\end{equation}
and the proof of the Theorem is complete.

{\bf Acknowledgments.}
This work was supported by the 973 Project, the PCSIRT Project of
the Ministry of Education, the Ministry of Science and Technology,
and the National Science Foundation of China.

\bibliographystyle{plain}

\begin{thebibliography}{00}

\bibitem{Bender:73} E.A. Bender. Central and local limit theorems
applied to asymptotic enumeration. {J. Combin. Theory A.} 15(1973) 91--111.

\bibitem{Chen}
W.Y.C. Chen, E.Y.P. Deng, R.R.X. Du, R.P. Stanley, C.H. Yan,
{Crossings and nestings of matchings and partitions,} {Trans. Amer.
Math. Soc.} \textbf{359}(4) (2007) 1555--1575.

\bibitem{Flajolet:07a} P. Flajolet, R. Sedgewick, {Analytic
combinatorics}, Cambridge University Press, New York, 2009.

\bibitem{Grabiner:93a} D.J. Grabiner, P. Magyar,
{Random walks in {Weyl} chambers and the decomposition of tensor
powers}, {J. Algebr. Comb.} 2 (1993) 239--260.


\bibitem{Reidys:07pseu} E.Y. Jin, J. Qin, C.M. Reidys,
{Combinatorics of RNA structures with pseudoknots}, {Bull. Math.
Biol.} \textbf{70}(1) (2008) 45--67.

\bibitem{Reidys:07lego}
E.Y. Jin, C.M. Reidys, {Combinatorial design of pseudoknot RNA},
{Adv. Appl. Math.} 42 (2009) 135--151.


\bibitem{Reidys:lego2}
E.Y. Jin, C.M. Reidys,
{RNA pseudoknots structures with arc-length $\geq 3$ and stack-length $\geq \sigma$}, {Discrete Appl. Math.}, to appear.

\bibitem{Reidys:08k}
E.Y. Jin, C.M. Reidys, R.R. Wang, { Asympotic analysis of
$k$-noncrossing matchings,} {arXiv:0803.0848}, (2008).

\bibitem{Kleitman:70}
D. Kleitman, {Proportions of irreducible diagrams}, {Studies in Appl. Math.} 49 (1970) 297--299.

\bibitem{Nussinov}
R. Nussinov, G. Pieczenik, J.R. Griggs, D.J. Kleitman, {Algorithms
for loop matchings}, SIAM J. of Appl. Math. 35 (1978) 68--82.


\bibitem{Reidys:shape}
C.M. Reidys, R.R. Wang, {Shapes of RNA  pseudoknot structures},
submitted.

\bibitem{Stanley:80}
R. Stanley, {Differentiably finite power series}, {Europ. J.
Combinatorics} 1 (1980) 175--188.

\bibitem{Waterman:79} M.S. Waterman,
{Combinatorics of RNA hairpins and cloverleafs}. {Stud. Appl.
Math.} 60(1979) 91-96.

\end{thebibliography}

\end{document}